\newtheorem{thm}{Theorem}[section]
\newtheorem{pro}[thm]{Proposition}
\newtheorem{lem}[thm]{Lemma}
\newtheorem{exa}[thm]{Example}
\theoremstyle{definition}
\newtheorem*{acknow}{Acknowledgments}
\theoremstyle{remark}
\newcommand{\en}{\mathbb{N}}
\newcommand{\zet}{\mathbb{Z}}
\newcommand{\er}{\mathbb{R}}
\newcommand{\ce}{\mathbb{C}}
\newcommand{\sign}{\mathrm{sign}\,}
\newcommand{\me}{\mathrm{e}}
\begin{document}

\title[Coarse and uniform embeddings]{Coarse and uniform embeddings between Orlicz sequence spaces}

\author{Michal Kraus}

\address{Department of Mathematical Analysis, Faculty of Mathematics and
Physics, Charles University, Sokolovsk\'a~83, 186~75 Praha~8,
Czech Republic}

\address{Universit\'{e} de Franche-Comt\'{e}, Laboratoire de Math\'{e}matiques UMR 6623, 16 route de Gray, 25030 Besan\c{c}on Cedex, France}

\email{mkraus@karlin.mff.cuni.cz}

\subjclass[2010]{Primary 46B80; Secondary 46B20}

\keywords{Coarse embedding, uniform embedding, strong uniform embedding, $\ell_p$-space, Orlicz sequence space.}

\begin{abstract}
We give an almost complete description of the coarse and uniform embeddability between Orlicz sequence spaces. We show that the embeddability between two Orlicz sequence spaces is in most cases determined only by the values of their upper Matuszewska-Orlicz indices. On the other hand, we present examples which show that sometimes the embeddability is not determined by the values of these indices.
\end{abstract}

\thanks{This research was partially supported by the grants GA\v{C}R 201/11/0345 and PHC Barrande 2012-26516YG}

%\date{\today}

\maketitle

\section{Introduction}

Let $(M,d_M),(N,d_N)$ be metric spaces and let $f:M\to N$ be a mapping. Then $f$ is called a \emph{coarse embedding} if there exist nondecreasing functions $\rho_1,\rho_2:[0,\infty)\to[0,\infty)$ such that $\lim_{t\to\infty}\rho_1(t)=\infty$ and $$\rho_1(d_M(x,y))\leq d_N(f(x),f(y))\leq\rho_2(d_M(x,y))\quad\text{for all }x,y\in M.$$ We say that $f$ is a \emph{uniform embedding} if $f$ is injective and both $f$ and $f^{-1}:f(M)\to M$ are uniformly continuous. Following Kalton \cite{kalton07} we call $f$ a \emph{strong uniform embedding} if $f$ is both a coarse embedding and a uniform embedding. Naturally we say that $M$ \emph{coarsely embeds} into $N$ if there exists a coarse embedding of $M$ into $N$, and similarly for other types of embeddings. Let us mention that what we call a coarse embedding is called a uniform embedding by some authors. We use the term coarse embedding because in the nonlinear geometry of Banach spaces the term uniform embedding has a well established meaning as above.

The study of conditions under which a Banach space coarsely (or uniformly) embeds into another Banach space has been a very active area of the nonlinear geometry of Banach spaces. Coarse embeddability has received much attention in recent years mainly because of its connection with geometric group theory, whereas the study of uniform embeddability may be regarded as classical. See \cite{kalton08} for a recent survey on the nonlinear geometry of Banach spaces.

Not much is known in general, but there are some partial results. The coarse and uniform embeddability between $\ell_p$-spaces is now completely cha\-ra\-cte\-ri\-zed. Let us recall the results. Nowak proved that $\ell_p$ coarsely embeds into $\ell_2$ if $1\leq p<2$ \cite[Proposition 4.1]{nowak05} and that $\ell_2$ coarsely embeds into $\ell_p$ for any $1\leq p<\infty$ \cite[Corollary 4]{nowak06}. A construction due to Albiac in \cite[proof of Proposition 4.1(ii)]{albiac}, originally used to show that $\ell_p$ Lipschitz embeds into $\ell_q$ if $0<p<q\leq1$, can be used to show that $\ell_p$ strongly uniformly embeds into $\ell_q$ if $1\leq p<q$ (see also \cite{albiacbaudier}, where this construction is performed for all $0<p<q$). This fact also follows from Proposition \ref{emb_orlicz_lp} below, whose proof is based on Albiac's construction. On the other hand, Johnson and Randrianarivony proved that $\ell_p$ does not coarsely embed into $\ell_2$ if $p>2$ \cite[Theorem 1]{johnsonrandria}. Later, results of Mendel and Naor \cite[Theorems~1.9 and 1.11]{mendelnaor} showed that $\ell_p$ actually does not coarsely or uniformly embed into $\ell_q$ if $p>2$ and $q<p$. Furthermore, $\ell_2$ uniformly embeds into $\ell_p$ if $1\leq p<\infty$. Indeed, by \cite[Corollary 8.11]{benylind}, $\ell_2$ uniformly embeds into $S_{\ell_2}$, which is uniformly homeomorphic to $S_{\ell_p}$ by \cite[Theorem 9.1]{benylind}. In fact, $\ell_2$ even strongly uniformly embeds into $\ell_p$ if $1\leq p<2$. This will be proved in Theorem~\ref{l2_emb} below. We can summarize the results as follows.

\begin{thm}\label{lp_embeddings}
Let $p,q\in[1,\infty)$. Then the following assertions are equivalent:
\renewcommand{\labelenumi}{\rm(\roman{enumi})}
\begin{enumerate}
\item $\ell_p$ coarsely embeds into $\ell_q$.
\item $\ell_p$ uniformly embeds into $\ell_q$.
\item $\ell_p$ strongly uniformly embeds into $\ell_q$.
\item $p\leq q$ or $q<p\leq2$.
\end{enumerate}
\end{thm}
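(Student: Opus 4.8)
The plan is to prove the five implications (iv) $\Rightarrow$ (iii), (iii) $\Rightarrow$ (i), (iii) $\Rightarrow$ (ii), (i) $\Rightarrow$ (iv) and (ii) $\Rightarrow$ (iv); together they give the asserted equivalence. The two implications issuing from (iii) are immediate, since a strong uniform embedding is by definition simultaneously a coarse embedding and a uniform embedding. For (i) $\Rightarrow$ (iv) and (ii) $\Rightarrow$ (iv) I would argue by contraposition: a De Morgan computation shows that the negation of (iv) is ``$q<p$ and $p>2$'', and in exactly this regime the metric cotype obstructions of Mendel and Naor \cite[Theorems~1.9 and 1.11]{mendelnaor} say that $\ell_p$ embeds into $\ell_q$ neither coarsely nor uniformly. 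Hence all the positive content sits in (iv) $\Rightarrow$ (iii): one must produce a strong uniform embedding of $\ell_p$ into $\ell_q$ whenever $p\le q$ or $q<p\le2$.

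I would construct these embeddings in four cases. If $p=q$, the identity works. If $p<q$, this is exactly Albiac's construction \cite{albiac} --- equivalently, it follows from Proposition~\ref{emb_orlicz_lp} applied to the Orlicz function $M(t)=t^p$, whose upper Matuszewska--Orlicz index equals $p<q$. If $q<p=2$, this is Theorem~\ref{l2_emb}. Finally, if $q<p<2$, there is no linear embedding of $\ell_p$ into $\ell_q$ (Pitt's theorem), so I would route through $\ell_2$: Proposition~\ref{emb_orlicz_lp} (the index of $t^p$ being $p<2$) gives a strong uniform embedding of $\ell_p$ into $\ell_2$, Theorem~\ref{l2_emb} (since $q<2$) gives one of $\ell_2$ into $\ell_q$, and their composition is again a strong uniform embedding. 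The last point uses the routine stability of strong uniform embeddings under composition: coarse embeddings compose by composing their moduli (monotonicity ensures the inequalities survive, and $\lim_{t\to\infty}\rho_1(t)=\infty$ is preserved), and uniform embeddings compose because uniform continuity does.

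I do not anticipate a genuinely hard step in this assembly; all the weight lies in the results being quoted --- the Mendel--Naor non-embeddability theorems on one side, and the Albiac-type Proposition~\ref{emb_orlicz_lp} together with Theorem~\ref{l2_emb} on the other. The only point that requires care is the case $q<p<2$, where a direct embedding is unavailable and one must detour through $\ell_2$; this detour is legitimate precisely because the hypothesis $q<p\le2$ forces both $p\le2$ (so Proposition~\ref{emb_orlicz_lp} applies) and $q<2$ (so Theorem~\ref{l2_emb} applies).
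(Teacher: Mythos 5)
Your proposal is correct and follows essentially the same route as the paper: the positive direction (iv)$\Rightarrow$(iii) is assembled exactly as the paper intends, from Albiac's construction via Proposition~\ref{emb_orlicz_lp} for $p<q$, Theorem~\ref{l2_emb} for $q<p=2$, and the detour $\ell_p\to\ell_2\to\ell_q$ for $q<p<2$ (which is precisely the argument of Theorem~\ref{orlicz_positive} specialized to $M(t)=t^p$, $N(t)=t^q$), while the negative direction is the Mendel--Naor result, as in the paper. No gaps.
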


Our aim is to generalize this classification to a wider class of Banach spaces, namely to Orlicz sequence spaces. Let $h_M$ and $h_N$ be Orlicz sequence spaces associated with Orlicz functions $M$ and $N$, and let $\beta_M$ and $\beta_N$ be the upper Matuszewska-Orlicz indices of the functions $M$ and $N$. We will show that the coarse (uniform) embeddability of $h_M$ into $h_N$ is in most cases determined only by the values of $\beta_M$ and $\beta_N$. The dependence of the embeddability of $h_M$ into $h_N$ on the values of $\beta_M$ and $\beta_N$ is very similar to the dependence of the embeddability of $\ell_p$ into $\ell_q$ on the values of $p$ and $q$ from Theorem \ref{lp_embeddings} (note that the upper Matuszewska-Orlicz index of $\ell_p$ is $p$). In some cases, however, the embeddability of $h_M$ into $h_N$ is not determined by the values of $\beta_M$ and $\beta_N$. A brief summary of our results is given at the end of the paper.

It is worth mentioning that Borel-Mathurin proved in \cite{borel1} the following result concerning uniform homeomorphisms (i.e. bijections which are uniformly continuous and their inverses are also uniformly continuous) between Orlicz sequence spaces. Let $M$ and $N$ be Orlicz functions and let $\alpha_M$ and $\alpha_N$ be their lower Matuszewska-Orlicz indices. If $h_M$ and $h_N$ are uniformly homeomorphic, then $\alpha_M=\alpha_N$ and $\beta_M=\beta_N$. The fact that $\alpha_M=\alpha_N$ was published also in \cite{borel2}, the fact that $\beta_M=\beta_N$ is a consequence of results of Kalton \cite{kalton_unif_homeo}.

This paper is organized as follows. In Section \ref{preliminaries} we summarize the notation and terminology, and recall basic facts concerning Orlicz sequence spaces. In Section \ref{embeddings_of_l2} we give the proof of the fact that $\ell_2$ strongly uniformly embeds into $\ell_p$ if $1\leq p<2$. Section \ref{main_results} contains the results concerning the coarse and uniform embeddability between Orlicz sequence spaces.

\section{Preliminaries}\label{preliminaries}

Our notation and terminology for Banach spaces is standard, as may be found for example in \cite{lindtzI} and \cite{lindtzII}. All Banach spaces throughout the paper are supposed to be real. The unit sphere of a Banach space $X$ is denoted by $S_X$. If $(X_n)_{n=1}^\infty$ is a sequence of Banach spaces and $1\leq p<\infty$, then $\left(\sum_{n=1}^\infty X_n\right)_{\ell_p}$ stands for the $\ell_p$-sum of these spaces, i.e. the space of all sequences $x=(x_n)_{n=1}^\infty$ such that $x_n\in X_n$ for every $n$, and $\|x\|=\left(\sum_{n=1}^\infty \|x_n\|^p\right)^\frac{1}{p}<\infty$. If a Banach space $X$ is isomorphic to a subspace of a Banach space $Y$, we will sometimes say that $X$ \emph{linearly embeds into}~$Y$.
 
Let us give the necessary background concerning Orlicz sequence spaces. Details may be found in \cite{lindtzI} and \cite{lindtzII}.

A function $M:[0,\infty)\to[0,\infty)$ is called an \emph{Orlicz function} if it is continuous, nondecreasing and convex, and satisfies $M(0)=0$ and $\lim_{t\to\infty}M(t)=\infty$. 

Let $M$ be an Orlicz function. We denote by $\ell_M$ the Banach space of all real sequences $(x_n)_{n=1}^\infty$ satisfying $\sum_{n=1}^\infty M\left(\frac{|x_n|}{\rho}\right)<\infty$ for some $\rho>0$, equipped with the norm defined for $x=(x_n)_{n=1}^\infty\in\ell_M$ by $$\|x\|=\inf\left\{\rho>0:\sum_{n=1}^\infty M\left(\frac{|x_n|}{\rho}\right)\leq1\right\}.$$ Let $h_M$ denote the closed subspace of $\ell_M$ consisting of all $(x_n)_{n=1}^\infty\in\ell_M$ such that $\sum_{n=1}^\infty M\left(\frac{|x_n|}{\rho}\right)<\infty$ for every $\rho>0$. The sequence $(e_n)_{n=1}^\infty$ of canonical vectors then forms a symmetric basis of $h_M$. Clearly if $M(t)=t^p$ for some $1\leq p<\infty$, then $h_M$ is just the space $\ell_p$ with its usual norm.  

If $M(t)=0$ for some $t>0$, then $M$ is said to be \emph{degenerate}. In this case, $h_M$ is isomorphic to $c_0$ and $\ell_M$ is isomorphic to $\ell_\infty$. In the sequel, \emph{Orlicz functions are always supposed to be nondegenerate}. 

We will be interested in the spaces $h_M$. Note that $\ell_M=h_M$ if and only if $\ell_M$ is separable if and only if $\beta_M<\infty$, where $\beta_M$ is defined below.

An important observation is that if two Orlicz functions $M_1$ and $M_2$ coincide on some neighbourhood of 0, then $h_{M_1}$ and $h_{M_2}$ consist of the same sequences and the norms induced by $M_1$ and $M_2$ are equivalent.

The lower and upper Matuszewska-Orlicz indices of $M$ are defined by
$$\alpha_M=\sup\left\{q\in\er:\sup_{\lambda,t\in(0,1]}\frac{M(\lambda t)}{M(\lambda)t^q}<\infty\right\},$$
$$\beta_M=\inf\left\{q\in\er:\inf_{\lambda,t\in(0,1]}\frac{M(\lambda t)}{M(\lambda)t^q}>0\right\},$$ respectively. Then $1\leq\alpha_M\leq\beta_M\leq\infty$. Note also that if $M(t)=t^p$ for some $1\leq p<\infty$, then $\alpha_M=\beta_M=p$. We will need the following theorem due to Lindenstrauss and Tzafriri (see \cite[Theorem 4.a.9]{lindtzI}).

\begin{thm}\label{lt_embeddings}
Let $M$ be an Orlicz function and let $1\leq p\leq\infty$. Then $\ell_p$ if $p<\infty$, or $c_0$ if $p=\infty$, is isomorphic to a subspace of $h_M$ if and only if $p\in[\alpha_M,\beta_M]$.
\end{thm}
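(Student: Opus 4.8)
The plan is to reduce Theorem~\ref{lt_embeddings} to a purely analytic reformulation of the Matuszewska--Orlicz indices and then to realize $\ell_p$ (or $c_0$) as the closed linear span of a suitable block basis of the unit vectors $(e_n)$ of $h_M$. Normalize $M(1)=1$, and for $\lambda\in(0,1]$ put $M_\lambda(t)=M(\lambda t)/M(\lambda)$, $t\in[0,1]$. Let $C_M\subseteq C[0,1]$ denote the set of all limits, uniform on $[0,1]$, of sequences $(M_{\lambda_k})$ with $\lambda_k\downarrow0$, and let $\overline{C_M}$ be its closure. Unwinding the definitions: if $q>\beta_M$ there is $c>0$ with $M(\lambda t)\ge c\,M(\lambda)t^q$ for all $\lambda,t\in(0,1]$, hence $f(t)\ge c\,t^q$ for every $f\in\overline{C_M}$; symmetrically, if $q<\alpha_M$ there is $K$ with $f(t)\le K\,t^q$ for every $f\in\overline{C_M}$. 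Letting $t\to0$ in these inequalities applied to $f(t)=t^p$ shows that $t^p\in\overline{C_M}$ forces $p\in[\alpha_M,\beta_M]$. The reverse implication --- that \emph{every} exponent in $[\alpha_M,\beta_M]$ is attained by limiting dilations of $M$ --- is the analytic heart of the matter: one shows, using convexity of $M$, that $\{p:t^p\in\overline{C_M}\}$ is a closed interval and then identifies its endpoints with $\alpha_M$ and $\beta_M$; this corresponds to Lemmas~4.a.6--4.a.7 of \cite{lindtzI}. The upshot is the equivalence $p\in[\alpha_M,\beta_M]\iff t^p\in\overline{C_M}$.

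($\Leftarrow$) Suppose $t^p\in\overline{C_M}$. For each $k$ choose $\lambda_k\downarrow0$ with $\sup_{t\in[0,1]}|M_{\lambda_k}(t)-t^p|<\varepsilon_k$, $\varepsilon_k\to0$, and an integer $N_k$ with $N_kM(\lambda_k)$ as close to $1$ as we wish (possible since $M(\lambda_k)\to0$). Let $A_1<A_2<\cdots$ be consecutive finite blocks of integers with $|A_k|=N_k$ and set $u_k=\lambda_k\sum_{i\in A_k}e_i$; after a negligible rescaling, $\|u_k\|_{h_M}=1$. Evaluating the Orlicz modular of $\sum_k a_ku_k$ blockwise and using $M_{\lambda_k}\approx(\cdot)^p$ shows that $\|\sum_k a_ku_k\|_{h_M}$ is, up to a fixed constant, comparable to $\left(\sum_k|a_k|^p\right)^{1/p}$, the cross-block error being harmless because $\varepsilon_k\to0$ and the scales are separated. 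Hence $(u_k)$ is equivalent to the unit vector basis of $\ell_p$, so $\ell_p$ embeds isomorphically into $h_M$. The case $p=\infty$ is analogous: $\beta_M=\infty$ means $M\notin\Delta_2$, which by a standard construction yields a normalized block basis of $(e_n)$ equivalent to the unit vector basis of $c_0$.

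($\Rightarrow$) Suppose first $1<p<\infty$, and let $T:\ell_p\to h_M$ be an isomorphic embedding; put $x_n=Te_n$. Since the unit vector basis of $\ell_p$ is weakly null and semi-normalized and $T$ is an isomorphism onto its range, $(x_n)$ is a semi-normalized, weakly null basic sequence in $h_M$, so the Bessaga--Pe\l czy\'nski selection principle gives a subsequence equivalent to a normalized block basis $(u_k)$ of $(e_n)$; this $(u_k)$ is then equivalent to the unit vector basis of $\ell_p$. Writing out the $h_M$-norms of the partial sums $\sum_{k\le n}u_k$ and using the symmetry of the basis of $h_M$, one reads off that suitable dilations $M_\lambda$ must approximate $t^p$ uniformly on $[0,1]$, i.e.\ $t^p\in\overline{C_M}$, whence $p\in[\alpha_M,\beta_M]$ by the first paragraph. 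The same argument with $c_0$ in place of $\ell_p$ handles $p=\infty$, yielding $M\notin\Delta_2$, i.e.\ $\beta_M=\infty$. Finally, the case $p=1$ is not covered by the weak-null reduction but follows from standard structure theory: if $\ell_1$ embeds into $h_M$ then $h_M^*=\ell_{M^*}$ is nonseparable, which via the duality of the indices forces $\alpha_M=1$, and $1\in[\alpha_M,\beta_M]$ trivially.

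The main obstacle is the analytic reformulation in the first paragraph --- showing that the exponents realized by limiting dilations of $M$ fill out exactly the closed interval $[\alpha_M,\beta_M]$ --- since the easy direction there is just a limit argument while the other direction genuinely uses the convexity of $M$ and a careful analysis of its asymptotic behavior at $0$. The block-basis construction in ($\Leftarrow$) is then routine once the scales $\lambda_k$ are chosen separated enough, and ($\Rightarrow$) is the standard gliding-hump principle combined with the norm bookkeeping for symmetric block bases of $h_M$.
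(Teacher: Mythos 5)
You should first note that the paper does not prove this statement at all: it is quoted from Lindenstrauss--Tzafriri \cite[Theorem 4.a.9]{lindtzI}, so your proposal has to be measured against the argument of \cite{lindtzI} that it paraphrases. Measured that way, it has a genuine gap, and it sits exactly at what you yourself call the analytic heart. The reduction you build everything on --- $p\in[\alpha_M,\beta_M]$ if and only if $t^p$ is a \emph{uniform limit of pure dilations} $M_{\lambda_k}(t)=M(\lambda_k t)/M(\lambda_k)$ with $\lambda_k\downarrow0$ --- is strictly stronger than what is true. What Lindenstrauss and Tzafriri actually prove (their Theorem 4.a.8) is that $\ell_p$ embeds into $h_M$ if and only if $t^p$ is \emph{equivalent at zero} to a function in the closed \emph{convex hull} of the dilations $\{M_\lambda:0<\lambda\le1\}$, and both relaxations are essential. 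For intermediate exponents the pure-dilation statement can fail: build $M$ so that on successive scale blocks $[a_{k+1},a_k]$, with $a_k\downarrow0$ extremely fast, it coincides with a multiple of $t^2$, respectively $t^4$ (with short convex interpolation pieces at the junctions where the exponent drops). Then $\alpha_M=2$ and $\beta_M=4$, so $\ell_3$ does embed into $h_M$ by the theorem; but for small $\lambda$ the window $[\lambda/2,\lambda]$ meets at most one junction, so on $[1/2,1]$ every $M_\lambda$ is a one- or two-piece patchwork of exponent-$2$, exponent-$4$ and linear pieces, and an elementary computation shows all such functions stay at uniform distance at least some absolute $c_0>0$ from $t^3$. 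Hence no sequence of dilations converges uniformly to $t^3$: your claimed equivalence is false, the scales $\lambda_k$ your ($\Leftarrow$) construction requires need not exist for $\alpha_M<p<\beta_M$, and no lemma of \cite{lindtzI} can supply them. Capturing intermediate exponents forces one to use blocks with \emph{varying} coefficients, which generate convex combinations of dilations and functions only equivalent (not equal) to $t^p$ --- precisely the machinery of Theorem 4.a.8, i.e.\ of the proof of the statement under review, so your citation of the neighbouring lemmas does not discharge the difficulty.

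The same defect infects your ($\Rightarrow$) direction: from a block basis equivalent to the unit vector basis of $\ell_p$ one cannot ``read off'' that some $M_\lambda$ approximates $t^p$ uniformly on $[0,1]$; the blocks have varying coefficients, so at best one extracts a function equivalent to $t^p$ in the closed convex hull. If you want a self-contained ($\Rightarrow$), there is a cheaper correct route: disjointly supported vectors in $h_M$ satisfy an upper $q$-estimate for every $q<\alpha_M$ and a lower $q$-estimate for every $q>\beta_M$ (this is the remark after Proposition 2.b.5 of \cite{lindtzII} already used in the paper), so comparing $\bigl\|\sum_{k\le n}u_k\bigr\|\asymp n^{1/p}$ with $n^{1/q}$ for a block basis $(u_k)$ equivalent to the $\ell_p$-basis gives $\alpha_M\le p\le\beta_M$ directly, and the $c_0$ case follows from the failure of finite cotype. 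Two smaller points: in ($\Leftarrow$) you need the errors $\varepsilon_k$ summable (or a further subsequence), not merely $\varepsilon_k\to0$, to keep the modular estimates uniform; and the $p=1$ argument via nonseparability of $h_M^*$ relies on the duality $1/\alpha_M+1/\beta_{M^*}=1$, which should be stated and cited.
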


Let $M$ be an Orlicz function and $x=(x_n)_{n=1}^\infty\in h_M$. Using Lebesgue's dominated convergence theorem we see that the function $$\rho\mapsto\sum_{n=1}^\infty M\left(\frac{|x_n|}{\rho}\right),\ \rho>0,$$ is continuous. In particular, 
\begin{equation}\label{sum_eq1}
\sum_{n=1}^\infty M\left(\frac{|x_n|}{\|x\|}\right)=1.
\end{equation}

The following lemma is a simple consequence of the convexity of $M$ combined with the fact that $M(0)=0$, and \eqref{sum_eq1}.

\begin{lem}\label{sum_vs_norm}
Let $M$ be an Orlicz function and let $x=(x_n)_{n=1}^\infty\in h_M$.
\renewcommand{\labelenumi}{\rm(\alph{enumi})}
\begin{enumerate}
\item If $\|x\|\leq1$, then $\sum_{n=1}^\infty M(|x_n|)\leq\|x\|$.
\item If $\|x\|\geq1$, then $\sum_{n=1}^\infty M(|x_n|)\geq\|x\|$.
\end{enumerate}
\end{lem}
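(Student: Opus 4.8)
The plan is to reduce everything to the one-line estimate that a convex function $M$ with $M(0)=0$ satisfies $M(\lambda t)\le\lambda M(t)$ for every $\lambda\in[0,1]$ and every $t\ge0$: indeed, $\lambda t=\lambda\cdot t+(1-\lambda)\cdot 0$, so convexity gives $M(\lambda t)\le\lambda M(t)+(1-\lambda)M(0)=\lambda M(t)$. I would first dispose of the case $x=0$, in which part~(a) is trivial because both sides vanish, and then assume $\|x\|>0$ so that \eqref{sum_eq1} is available.

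For part~(a), assuming $0<\|x\|\le 1$, I would apply the displayed estimate with $\lambda=\|x\|$ and $t=|x_n|/\|x\|$, obtaining $M(|x_n|)\le\|x\|\,M(|x_n|/\|x\|)$ for each $n$. Summing over $n$ and using \eqref{sum_eq1} then yields $\sum_{n=1}^\infty M(|x_n|)\le\|x\|\sum_{n=1}^\infty M(|x_n|/\|x\|)=\|x\|$.

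For part~(b), assuming $\|x\|\ge 1$, the scalar $1/\|x\|$ lies in $(0,1]$, so applying the estimate with $\lambda=1/\|x\|$ and $t=|x_n|$ gives $M(|x_n|/\|x\|)\le\frac{1}{\|x\|}M(|x_n|)$. Summing over $n$ and invoking \eqref{sum_eq1} once more gives $1=\sum_{n=1}^\infty M(|x_n|/\|x\|)\le\frac{1}{\|x\|}\sum_{n=1}^\infty M(|x_n|)$, that is, $\sum_{n=1}^\infty M(|x_n|)\ge\|x\|$.

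Since the whole argument rests on a single elementary convexity inequality together with the normalization identity \eqref{sum_eq1}, there is no genuine obstacle here; the only point needing a word of care is the separate handling of $x=0$, for which \eqref{sum_eq1} is not meaningful.
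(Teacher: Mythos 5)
Your argument is correct and is exactly the paper's intended one: the paper states the lemma as "a simple consequence of the convexity of $M$ combined with the fact that $M(0)=0$, and \eqref{sum_eq1}", and your proof just writes out that convexity estimate $M(\lambda t)\le\lambda M(t)$ applied with $\lambda=\|x\|$ resp. $\lambda=1/\|x\|$. The separate treatment of $x=0$ is a sensible (if minor) point of care that the paper leaves implicit.
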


If $X$ is a Banach space, define $q_X=\inf\left\{q\geq2:X\text{ has cotype }q\right\}$. Then if $M$ is an Orlicz function, we have \begin{equation}\label{cotype_orlicz}q_{h_M}=\max(2,\beta_M).\end{equation} This can be proved as follows. Suppose first that $\beta_M<\infty$. Note that $h_M$, equipped with the natural order, is a Banach lattice. By Remark 2 after Proposition 2.b.5 in \cite{lindtzII}, we have $$\beta_M=\inf\left\{1<q<\infty:h_M\text{ satisfies a lower $q$-estimate}\right\}.$$ By \cite[Theorem~1.f.7]{lindtzII}, if a Banach lattice satisfies a lower $r$-estimate for some $1<r<\infty$, then it is $q$-concave for every $r<q<\infty$. And by \cite[Proposition~1.f.3(i)]{lindtzII}, if a Banach lattice is $q$-concave for some $q\geq2$, then it is of cotype $q$. Hence $q_{h_M}\leq\max(2,\beta_M)$. The opposite inequality follows from the fact that $\ell_{\beta_M}$ is isomorphic to a subspace of $h_M$ by Theorem \ref{lt_embeddings}, and $q_{\ell_{\beta_M}}=\max(2,\beta_M)$. If $\beta_M=\infty$, then, by Theorem \ref{lt_embeddings}, $h_M$ contains $c_0$, and the result follows.

\section{\texorpdfstring{Embeddings of $\ell_2$}{Embeddings of l2}}\label{embeddings_of_l2}

In this section we give the promised proof of the fact that $\ell_2$ strongly uniformly embeds into $\ell_p$ if $1\leq p<2$. The proof is inspired by Nowak's construction of coarse embeddings between these spaces in \cite[proof of Corollary~4]{nowak06}.

Recall that a kernel $K$ on a set $X$ (i.e. a function $K:X\times X\to\ce$ such that $K(y,x)=\overline{K(x,y)}$ for all $x,y\in X$) is called 
\begin{enumerate}\renewcommand{\labelenumi}{\rm(\alph{enumi})}
\item \emph{positive definite} if $\sum_{i,j=1}^nK(x_i,x_j)c_i\overline{c_j}\geq0$ for all $n\in\en$, $x_1,\dots,x_n\in X$ and $c_1,\dots,c_n\in\ce$,
\item \emph{negative definite} if $\sum_{i,j=1}^nK(x_i,x_j)c_i\overline{c_j}\leq0$ for all $n\in\en$, $x_1,\dots,x_n\in X$ and $c_1,\dots,c_n\in\ce$ satisfying $\sum_{i=1}^n c_i=0$. 
\end{enumerate}
Note that if the kernel $K$ is real-valued, then in order to check the positive or negative definiteness of $K$ it suffices to use only the real scalars.
 
Recall also that for $p,q\in[1,\infty)$, the Mazur map $M_{p,q}:S_{\ell_p}\to S_{\ell_q}$, defined for $x=(x_n)_{n=1}^\infty$ by $$M_{p,q}(x)=\left(|x_n|^{\frac{p}{q}}\sign{x_n}\right)_{n=1}^\infty,$$ is a uniform homeomorphism between these unit spheres. If $p>q$, then there exists $C>0$ such that for all $x,y\in S_{\ell_p}$ we have the inequalities 
\begin{equation}\label{mazur_estim}
C\|x-y\|^\frac{p}{q}\leq\|M_{p,q}(x)-M_{p,q}(y)\|\leq\frac{p}{q}\|x-y\|,
\end{equation}
and the opposite inequalities if $p<q$ (with different $C$) because clearly $M_{q,p}=M_{p,q}^{-1}$. See \cite[Theorem 9.1]{benylind} for a proof. 

\begin{thm}\label{l2_emb}
Let $1\leq p<2$. Then $\ell_2$ strongly uniformly embeds into $\ell_p$.
\end{thm}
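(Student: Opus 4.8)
The plan is to exhibit a single map $f\colon\ell_2\to\ell_p$ that is simultaneously a coarse embedding and a uniform embedding. It will be obtained by gluing together, in an $\ell_p$-sum, a whole scale of ``small'' maps, each of which is the Mazur map $M_{2,p}$ precomposed with a Gaussian-type kernel map on $\ell_2$; this is the mechanism behind Nowak's construction, tuned so as to also control the uniform continuity of the inverse.

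First I would recall the elementary fact that $(x,y)\mapsto\|x-y\|_2^2$ is a negative definite kernel on $\ell_2$: if $c_1,\dots,c_n\in\er$ with $\sum_i c_i=0$, then $\sum_{i,j}\|x_i-x_j\|_2^2\,c_ic_j=-2\bigl\|\sum_i c_ix_i\bigr\|_2^2\le0$. By Schoenberg's classical theorem it follows that for each $t>0$ the Gaussian kernel $K_t(x,y)=\me^{-t\|x-y\|_2^2}$ is positive definite on $\ell_2$, and since $K_t(x,x)=1$ its reproducing-kernel Hilbert space is separable; identifying it with $\ell_2$ we obtain a map $\phi_t\colon\ell_2\to S_{\ell_2}$ with $\langle\phi_t(x),\phi_t(y)\rangle=K_t(x,y)$, so
$$\|\phi_t(x)-\phi_t(y)\|_2^2=2\bigl(1-\me^{-t\|x-y\|_2^2}\bigr),$$
and in particular $\phi_t$ is $\sqrt{2t}$-Lipschitz. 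Composing with the Mazur map, put $\psi_t:=M_{2,p}\circ\phi_t\colon\ell_2\to S_{\ell_p}$; applying \eqref{mazur_estim} (with the roles of $p,q$ played by $2,p$, since $2>p$) and using $1-\me^{-u}\le\min(1,u)$, we get for all $x,y\in\ell_2$
$$C\,\bigl(2(1-\me^{-t\|x-y\|_2^2})\bigr)^{1/p}\ \le\ \|\psi_t(x)-\psi_t(y)\|_p\ \le\ \tfrac2p\min\bigl(\sqrt2,\ \sqrt{2t}\,\|x-y\|_2\bigr).$$

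Next I would fix scales $t_j=4^{-j}$ and weights $a_j>0$ with $\sum_j a_j^p t_j^{p/2}<\infty$ but $\sum_j a_j^p\bigl(1-\me^{-t_js^2}\bigr)\to\infty$ as $s\to\infty$; for example $a_j=2^j j^{-2/p}$ works, for then $a_j^pt_j^{p/2}=j^{-2}$, while for a given $s$ the terms with $j\le\log_2 s$ already sum to at least a constant multiple of $s^p/(\log s)^2$. I then define $f\colon\ell_2\to\bigl(\sum_j\ell_p\bigr)_{\ell_p}\cong\ell_p$ by
$$f(x)=\bigl(a_j\bigl(\psi_{t_j}(x)-\psi_{t_j}(0)\bigr)\bigr)_{j=1}^\infty .$$
The upper bound above with $y=0$ gives $\|f(x)\|_p^p\le(\tfrac2p)^p2^{p/2}\|x\|_2^p\sum_j a_j^pt_j^{p/2}<\infty$, so $f$ is well defined; and since the base point cancels in differences, $\|f(x)-f(y)\|_p^p=\sum_j a_j^p\|\psi_{t_j}(x)-\psi_{t_j}(y)\|_p^p$ for all $x,y$. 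Summing the two inequalities above then yields
$$2C^p\sum_j a_j^p\bigl(1-\me^{-t_j\|x-y\|_2^2}\bigr)\ \le\ \|f(x)-f(y)\|_p^p\ \le\ \Bigl(\tfrac2p\Bigr)^p\sum_j a_j^p\min\bigl(2^{p/2},(2t_j)^{p/2}\|x-y\|_2^p\bigr),$$
and both outer expressions are finite, nondecreasing functions of $\|x-y\|_2$ alone; taking $p$-th roots produces nondecreasing $\rho_1,\rho_2\colon[0,\infty)\to[0,\infty)$ with $\rho_1(\|x-y\|_2)\le\|f(x)-f(y)\|_p\le\rho_2(\|x-y\|_2)$. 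By the choice of $(t_j,a_j)$, $\rho_1(s)\to\infty$; and $\rho_2(s)\le\frac{2\sqrt2}{p}\bigl(\sum_j a_j^pt_j^{p/2}\bigr)^{1/p}s\to0$ as $s\to0$, so $f$ is Lipschitz. Thus $f$ is a coarse embedding. Moreover $\rho_1(s)>0$ for $s>0$ (keep only $j=1$): this gives injectivity of $f$, and if $\|x-y\|_2\ge\varepsilon$ then $\|f(x)-f(y)\|_p\ge(2C^pa_1^p(1-\me^{-t_1\varepsilon^2}))^{1/p}>0$, which together with the Lipschitz continuity of $f$ shows that $f$ is a uniform embedding. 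Hence $f$ is a strong uniform embedding.

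The step I expect to be the real obstacle is reconciling the two sides of the construction: well-definedness and (uniform) continuity of $f$ push the weights $a_j$ down, whereas unboundedness of $\rho_1$ pushes the partial sums $\sum_j a_j^p$ up, and a naive choice makes both impossible at once. The point that breaks the deadlock is that neither well-definedness nor uniform continuity actually requires $\sum_j a_j^p<\infty$ --- only $\sum_j a_j^p t_j^{p/2}<\infty$ --- because near the diagonal only the ``far'' scales (those with $t_j\|x-y\|_2^2$ small) are active, and on them $\psi_{t_j}$ is controlled by $\sqrt{t_j}\,\|x-y\|_2$ rather than by a constant; this is precisely what leaves room to also force $\sum_j a_j^p(1-\me^{-t_js^2})\to\infty$. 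The exponent $2/p>1$ in the lower Mazur estimate causes no trouble, since $S_{\ell_2}$ has bounded diameter and it only costs a multiplicative constant.
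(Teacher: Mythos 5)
Your construction is correct and is essentially the proof given in the paper: the negative definite kernel $\|x-y\|^2$, Schoenberg's theorem, the feature maps $\phi_t$ into $S_{\ell_2}$, composition with the Mazur map $M_{2,p}$, and an $\ell_p$-sum over scales $t_j$ with the lower Mazur estimate providing the coarse lower bound. The only difference is your weights $a_j$, which are in fact unnecessary (they do yield a quantitatively stronger compression $\rho_1$): with unweighted terms and any $t_n>0$ satisfying $\sum_n\sqrt{t_n}<\infty$, the lower bound $\sum_n\bigl(1-\me^{-t_ns^2}\bigr)$ already tends to infinity as $s\to\infty$ by monotone convergence, since each of the infinitely many terms tends to $1$, which is how the paper resolves the ``deadlock'' you describe.
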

\begin{proof}
First, for every $t>0$ there exists a mapping $\varphi_t:\ell_2\to S_{\ell_2}$ such that for all $x,y\in\ell_2$ we have
\begin{equation}\label{dadague_map}
\|\varphi_t(x)-\varphi_t(y)\|^2=2\left(1-e^{-t\|x-y\|^2}\right).
\end{equation}
To prove this statement, fix $t>0$. By a simple computation, the function $(x,y)\mapsto\|x-y\|^2$, $(x,y)\in\ell_2\times\ell_2$, is a negative definite kernel on $\ell_2$, and therefore, by \cite[Proposition 8.4]{benylind}, the function $(x,y)\mapsto\me^{-t\|x-y\|^2}$, $(x,y)\in\ell_2\times\ell_2$, is a positive definite kernel on $\ell_2$. By \cite[Proposition 8.5(i)]{benylind}, there exists a Hilbert space $H$ and a mapping $T:\ell_2\to H$ such that $\me^{-t\|x-y\|^2}=\left\langle T(x),T(y)\right\rangle$ for all $x,y\in\ell_2$. Then $$\|T(x)-T(y)\|^2=2\left(1-e^{-t\|x-y\|^2}\right)$$ for all $x,y\in\ell_2$, and we may suppose that $H$ is real, infinite-dimensional and separable (since $T$ is continuous), i.e. $H=\ell_2$. Take $\varphi_t=T$. Clearly $\|\varphi_t(x)\|=1$ for every $x\in\ell_2$.

Let $t_n>0$, $n\in\en$, be such that $\sum_{n=1}^\infty\sqrt{t_n}<\infty$. For each $n\in\en$, define $f_n=M_{2,p}\circ\varphi_{t_n}$. Let $x_0\in\ell_2$ be arbitrary and define $f:\ell_2\to\left(\sum_{n=1}^\infty\ell_p\right)_{\ell_p}$ by $f(x)=(f_n(x)-f_n(x_0))_{n=1}^\infty$ (that $f(x)\in\left(\sum_{n=1}^\infty\ell_p\right)_{\ell_p}$ for every $x\in\ell_2$ will follow from the estimate \eqref{l2_emb_upper} below). Let us show that $f$ is a strong uniform embedding. Since the spaces $\left(\sum_{n=1}^\infty\ell_p\right)_{\ell_p}$ and $\ell_p$ are isometric, the proof will then be complete.

Let $x,y\in\ell_2$. Then
\begin{align}\label{l2_emb_upper}
\|f(x)-f(y)\|&\leq\sum_{n=1}^\infty\|f_n(x)-f_n(y)\|=\sum_{n=1}^\infty\|M_{2,p}(\varphi_{t_n}(x))-M_{2,p}(\varphi_{t_n}(y))\|\\
\nonumber&\leq\frac{2}{p}\sum_{n=1}^\infty\|\varphi_{t_n}(x)-\varphi_{t_n}(y)\|=\frac{2\sqrt{2}}{p}\sum_{n=1}^\infty\left(1-e^{-t_n\|x-y\|^2}\right)^\frac{1}{2}\\
\nonumber&\leq\frac{2\sqrt{2}}{p}\sum_{n=1}^\infty\left(t_n\|x-y\|^2\right)^\frac{1}{2}
=\left(\frac{2\sqrt{2}}{p}\sum_{n=1}^\infty\sqrt{t_n}\right)\|x-y\|,
\end{align}
where the first inequality follows from the triangle inequality, the second inequality from \eqref{mazur_estim}, the second equality from \eqref{dadague_map}, and the third inequality from the fact that $1-\me^{-t}\leq t$ for all $t\in\er$. By our assumption, $\sum_{n=1}^\infty\sqrt{t_n}<\infty$.

On the other hand,
\begin{align}\label{l2_emb_lower}
\|f(x)-f(y)\|^p&=\sum_{n=1}^\infty\|f_n(x)-f_n(y)\|^p\\
\nonumber&=\sum_{n=1}^\infty\|M_{2,p}(\varphi_{t_n}(x))-M_{2,p}(\varphi_{t_n}(y))\|^p\\
\nonumber&\geq C^p\sum_{n=1}^\infty\|\varphi_{t_n}(x)-\varphi_{t_n}(y)\|^2\\
\nonumber&=2C^p\sum_{n=1}^\infty\left(1-e^{-t_n\|x-y\|^2}\right),
\end{align}
where the inequality follows from \eqref{mazur_estim}.

Define functions $\rho_1,\rho_2$ on $[0,\infty)$ by $$\rho_1(s)=2^\frac{1}{p}C\left(\sum_{n=1}^\infty\left(1-e^{-t_ns^2}\right)\right)^\frac{1}{p}$$ and $$\rho_2(s)=\left(\frac{2\sqrt{2}}{p}\sum_{n=1}^\infty\sqrt{t_n}\right)s.$$ Then, by \eqref{l2_emb_upper} and \eqref{l2_emb_lower}, for all $x,y\in\ell_2$ we have $$\rho_1(\|x-y\|)\leq\|f(x)-f(y)\|\leq\rho_2(\|x-y\|).$$ 

Clearly both $\rho_1,\rho_2$ are nondecreasing. By Lebesgue's monotone convergence theorem, $\rho_1(s)\to\infty$ as $s\to\infty$, and therefore $f$ is a coarse embedding. Since $\rho_2(s)\to0$ as $s\to0_+$, and $$\rho_1(s)\geq2^\frac{1}{p}C\left(1-e^{-t_1s^2}\right)^\frac{1}{p}>0$$ for every $s>0$, we see that $f$ is also a uniform embedding.
\end{proof}

\section{Main Results}\label{main_results}

Let us start with a sufficient condition for the strong uniform embeddability of Orlicz sequence spaces into $\ell_p$-spaces. The proof of the following proposition is based on a construction due to Albiac \cite[proof of Proposition 4.1(ii)]{albiac}.

\begin{pro}\label{emb_orlicz_lp}
Let $M$ be an Orlicz function with $\beta_M<\infty$ and let $p>\beta_M$. Then $h_M$ strongly uniformly embeds into $\ell_p$. 
\end{pro}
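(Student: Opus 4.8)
The plan is to build a strong uniform embedding $h_M \to \ell_p$ by discretizing along the symmetric basis, imitating Albiac's construction. First I would reduce to a convenient normalization of $M$: since $h_M$ depends only on the germ of $M$ at $0$ up to equivalence of norm, and since $\beta_M < p$, I would fix $r$ with $\beta_M < r < p$ and use the definition of $\beta_M$ to get a constant $c>0$ with $M(\lambda t) \ge c\, M(\lambda)\, t^{r}$ for all $\lambda, t \in (0,1]$; equivalently $M(s)/s^{r}$ is, up to a constant, nonincreasing near $0$, so $M$ behaves no worse than $t^{r}$ at small scales. This is the quantitative input that makes the target exponent $p$ work. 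I would also record the trivial bound coming from convexity and $M(0)=0$, namely $M(\lambda t) \le t\, M(\lambda)$ for $t \in [0,1]$, which controls things from above.

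Next I would set up the map. For $x = (x_n) \in h_M$ with $\|x\| \le 1$ one has $\sum_n M(|x_n|) \le 1$ by Lemma~\ref{sum_vs_norm}(a). The idea of Albiac's construction is to replace each coordinate $x_n$ by a long block of constant entries in $\ell_p$ whose $\ell_p$-norm encodes $M(|x_n|)$: roughly, send $x_n$ to a vector that contributes $M(|x_n|)$ to $\sum$ of $p$-th powers, with a sign, so that the block-supports for different $n$ are disjoint. Concretely, after choosing a suitable bijection of the index set, define $f_n(x)$ to be the element of $\ell_p$ supported on the $n$-th block equal (in absolute value, with sign $\operatorname{sign} x_n$) to $M(|x_n|)^{1/p}$ spread over coordinates so that $\|f_n(x)\|_p^p \asymp M(|x_n|)$; then set $f(x) = (f_n(x) - f_n(0))_{n}$ in $\left(\sum_n \ell_p\right)_{\ell_p} \cong \ell_p$, so that $f$ vanishes at a base point. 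One checks $f(x)$ lies in the space from the upper estimate below.

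The heart is the two-sided distortion estimate. For $x,y$ in a bounded set, say $\|x\|,\|y\| \le R$ (the uniform-embedding part only needs small/bounded distances, the coarse part needs large ones, so I would treat scales separately). The upper bound: $\|f(x)-f(y)\|_p^p = \sum_n \|f_n(x)-f_n(y)\|_p^p \asymp \sum_n |M(|x_n|)^{1/p} - M(|y_n|)^{1/p}\cdot(\pm)|^p$, and using $|a-b|^p \le |a^p - b^p|$-type inequalities together with the Lipschitz-type bound $M(|x_n|) - M(|y_n|) = O(\cdot)$ from convexity, this is dominated by a nondecreasing function of $\|x-y\|$ that tends to $0$ at $0$; the key point is that for $\|x-y\|$ small the sum is small. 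The lower bound is where the choice $r < p$ enters: from $M(|x_n|) \gtrsim |x_n|^{r}$ at small scales one gets $\|f_n(x)\|_p^p \gtrsim |x_n|^{r}$, hence $\|f(x)-f(y)\|_p \gtrsim \|x-y\|_{\ell_r}^{r/p}$ on the relevant scale, and since $r \ge 1$, $\|\cdot\|_{\ell_r} \ge \|\cdot\|_{\ell_M}$ up to constants locally — here I would invoke $\beta_M < \infty$ and Lemma~\ref{sum_vs_norm} to compare the $h_M$ norm with an $\ell_r$-type quantity on bounded sets. Assembling, $\rho_1(\|x-y\|) \le \|f(x)-f(y)\| \le \rho_2(\|x-y\|)$ with $\rho_1$ nondecreasing, positive, and $\to\infty$, and $\rho_2$ nondecreasing with $\rho_2(s)\to 0$ as $s \to 0_+$; that is exactly a strong uniform embedding.

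The main obstacle I expect is the lower estimate away from the origin — i.e. making $\rho_1(t) \to \infty$. For small $\|x-y\|$ the comparison $M(|x_n|) \gtrsim |x_n|^{r}$ (valid only for arguments in $(0,1]$) suffices, but for large distances one must handle coordinates where $|x_n|$ is not small, and one must show the blocks do not "collapse" distance. The standard fix, which I would follow, is to note that $f$ restricted to a fixed bounded set is already bi-Lipschitz-like into $\ell_p$ in the sense above, and then extend coarsely by a telescoping/partition-of-unity argument over dyadic annuli — or, more simply, observe that it is enough to coarsely and uniformly embed the ball, since $h_M$ with $\beta_M<\infty$ is (by Theorem~\ref{lt_embeddings} and standard facts) such that a strong uniform embedding of $S_{h_M}$ or of a ball upgrades to one of the whole space via a gliding-hump/$\ell_p$-sum construction exactly as in the proof of Theorem~\ref{l2_emb} above. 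Getting the bookkeeping of block lengths and the resulting constants consistent across all scales is the one genuinely fiddly part; everything else is convexity of $M$, the index inequality for $\beta_M$, and Lemma~\ref{sum_vs_norm}.
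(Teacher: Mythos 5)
There is a genuine gap, and it is in the very heart of the construction: your coordinate map is not Albiac's. Sending $x_n$ to a block whose entries have absolute value comparable to $M(|x_n|)^{1/p}$ (with a sign) produces, for two sequences $x,y$, blockwise differences of size $\bigl|M(|x_n|)^{1/p}-M(|y_n|)^{1/p}\bigr|$, which depend on \emph{where} $x_n,y_n$ sit and not only on $x_n-y_n$. Since $M$ is convex and unbounded, this destroys the upper estimate: take $x=Re_1$, $y=(R+1)e_1$; then $\|x-y\|_{h_M}$ is constant in $R$, but $M(R+1)^{1/p}-M(R)^{1/p}\to\infty$ as $R\to\infty$ whenever $M$ grows faster than $t^p$ at infinity, so $f$ is not even coarsely bounded above, let alone uniformly continuous (subtracting $f_n(0)$ does not help). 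The construction the paper uses avoids exactly this by working with a whole family of tent functions $f_{n,k}$ at all dyadic scales $n\in\zet$ and all translates $k\in\zet$, chosen so that for \emph{all} $s,t\in\er$ one has $M(|s-t|)\leq\sum_{n,k}|f_{n,k}(s)-f_{n,k}(t)|^p\leq A\,M(|s-t|)$; the point is that the output is a function of the difference $|s-t|$ alone, uniformly over the line.

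Your lower-bound strategy also goes the wrong way. From $\beta_M<r$ one gets $M(t)\geq C t^{r}$ near $0$, which yields $\|z\|_{\ell_r}\lesssim\|z\|_{h_M}$, not the reverse; so an estimate $\|f(x)-f(y)\|\gtrsim\|x-y\|_{\ell_r}^{r/p}$ gives no lower control in terms of $\|x-y\|_{h_M}$ (already for $M(t)=t^{\beta}$ with $\beta<r$, the vectors $n^{-1/\beta}\sum_{i\le n}e_i$ have $h_M$-norm $1$ and $\ell_r$-norm tending to $0$). The route that actually works is to get the two-sided comparison $\sum_i M(|x_i-y_i|)\leq\|f(x)-f(y)\|^p\leq A\sum_i M(|x_i-y_i|)$ and then convert $\sum_i M(|x_i-y_i|)$ into bounds by $\|x-y\|$ and $\|x-y\|^{q}$ using Lemma~\ref{sum_vs_norm}, \eqref{sum_eq1} and the index inequality extended to all $\lambda>0$; this gives moduli $\rho_1,\rho_2$ valid on the whole space at once, with $\rho_1(t)\to\infty$, so no restriction to bounded sets is needed. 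Consequently your final step --- first embedding a ball and then upgrading to the whole space by a gliding-hump or dyadic-annuli argument ``as in Theorem~\ref{l2_emb}'' --- is not only unproved (it is not a standard fact that a strong uniform embedding of a ball of $h_M$ extends to one of $h_M$), it is also unnecessary once the coordinatewise estimate is set up correctly; note that Theorem~\ref{l2_emb} itself is not an extension-from-a-ball argument but a sum of globally defined maps over a sequence of scales.
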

\begin{proof}
We may clearly suppose that $M(1)=1$. Fix arbitrary $q$ such that $\beta_M<q<p$. Then there is $C>0$ such that
\begin{equation}\label{first_bound_beta_M}
\frac{M(\lambda t)}{M(\lambda)t^q}\geq C\quad\text{for all $\lambda,t\in(0,1]$}.
\end{equation}
We may suppose without loss of generality that 
\begin{equation}\label{bound_beta_M}
\frac{M(\lambda t)}{M(\lambda)t^q}\geq C\quad\text{for all $\lambda>0$ and $t\in(0,1]$}.
\end{equation}
Indeed, if \eqref{first_bound_beta_M} holds, then in particular $M(t)\geq Ct^q$ for every $0<t\leq1$. We may clearly suppose that $Ct^q\leq M(t)\leq Dt^q$ for some $D\geq1$ and for every $t>1$. Then if $\lambda>1$ and $t\in(0,1]$, we have $M(\lambda t)\geq C(\lambda t)^q=C\lambda^qt^q\geq\frac{C}{D}M(\lambda)t^q$. Since $\frac{C}{D}\leq C$, we may take as $C$ in \eqref{bound_beta_M} the number $\frac{C}{D}$.

We will proceed in two steps.

\noindent{\bfseries Step 1:} We will construct functions $f_{n,k}:\er\to[0,\infty)$, $n,k\in\zet$, such that for certain constant $A\geq 1$ and for all $s,t\in\er$ we have 
\begin{equation}\label{odhad_fnk}
M(|s-t|)\leq\sum_{n,k=-\infty}^\infty|f_{n,k}(s)-f_{n,k}(t)|^p\leq A\,M(|s-t|).
\end{equation}

Suppose that $n\in\zet$. Let $a_n=2^{n+2}M\left(\frac{1}{2^{n+1}}\right)^{\frac{1}{p}}$ and define
$$f_n(t)=\begin{cases}
a_nt&\text{if }t\in\left[0,\frac{1}{2^n}\right],\\
-a_n\left(t-\frac{1}{2^{n-1}}\right)&\text{if }t\in\left(\frac{1}{2^n},\frac{1}{2^{n-1}}\right],\\
0&\text{otherwise.}
\end{cases}$$
For $k\in\zet$, define the translation of $f_n$ by $$f_{n,k}(t)=f_n\left(t-\frac{k-1}{2^{n+1}}\right),\ t\in\er.$$ Note that for all $n,k\in\zet$ the estimate $0\leq f_{n,k}\leq a_n\frac{1}{2^n}$ holds, the Lipschitz constant of $f_{n,k}$ is $a_n$, and the support of $f_{n,k}$ is $\left[\frac{k-1}{2^{n+1}},\frac{k-1}{2^{n+1}}+\frac{1}{2^{n-1}}\right]$.

For the upper estimate in \eqref{odhad_fnk}, let $s,t\in\er$, $s\neq t$, and let $N\in\zet$ be such that $\frac{1}{2^{N+1}}<|s-t|\leq\frac{1}{2^N}$.

If $n>N$ and $k\in\zet$, then
\begin{align}\label{n>N}
|f_{n,k}(s)-f_{n,k}(t)|^p&\leq a_n^p\frac{1}{2^{np}}=4^pM\left(\frac{1}{2^{n+1}}\right)=4^pM\left(\frac{1}{2^{n-N}}\frac{1}{2^{N+1}}\right)\\
\nonumber&\leq4^p\frac{1}{2^{n-N}}M\left(\frac{1}{2^{N+1}}\right)\leq4^p\frac{1}{2^{n-N}}M(|s-t|)
\end{align}
(the first inequality follows from the fact that $0\leq f_{n,k}\leq a_n\frac{1}{2^n}$, while the second one from the convexity of $M$ and the fact that $M(0)=0$).

If $n\leq N$ and $k\in\zet$, then
\begin{align}\label{n<N}
|f_{n,k}(s)-f&_{n,k}(t)|^p\leq a_n^p|s-t|^p\leq2^{p(n+2)}M\left(\frac{1}{2^{n+1}}\right)\frac{1}{2^{pN}}\\
\nonumber&=4^p\frac{1}{2^{p(N-n)}}M\left(\frac{1}{2^{n+1}}\right)=4^p\left(\frac{1}{2^{\frac{p}{q}(N-n)}}\right)^qM\left(\frac{1}{2^{n+1}}\right)\\
\nonumber&\leq\frac{4^p}{C}M\left(\frac{1}{2^{\frac{p}{q}(N-n)}}\frac{1}{2^{n+1}}\right)=\frac{4^p}{C}M\left(\frac{1}{2^{\left(\frac{p}{q}-1\right)(N-n)}}\frac{1}{2^{N+1}}\right)\\
\nonumber&\leq\frac{4^p}{C}\frac{1}{2^{\left(\frac{p}{q}-1\right)(N-n)}}M\left(\frac{1}{2^{N+1}}\right)\\
\nonumber&\leq\frac{4^p}{C}\frac{1}{2^{\left(\frac{p}{q}-1\right)(N-n)}}M(|s-t|)
\end{align}
(the first inequality follows from the fact that the Lipschitz constant of $f_{n,k}$ is $a_n$, the third one from \eqref{bound_beta_M}, and the fourth one from the convexity of $M$ and the fact that $M(0)=0$).

Note that the estimates \eqref{n>N} and \eqref{n<N} do not depend on $k$. For $n\in\zet$, denote $S_n=\{k\in\zet:f_{n,k}(s)>0\text{ or }f_{n,k}(t)>0\}$. Clearly the cardinality of $S_n$ is at most 8. Hence, using \eqref{n>N} and \eqref{n<N},
\begin{align*}
\sum_{n,k=-\infty}^\infty|&f_{n,k}(s)-f_{n,k}(t)|^p\\
&=\sum_{n>N}\sum_{k\in S_n}|f_{n,k}(s)-f_{n,k}(t)|^p+\sum_{n\leq N}\sum_{k\in S_n}|f_{n,k}(s)-f_{n,k}(t)|^p\\
&\leq8\cdot4^p\left(\sum_{n>N}\frac{1}{2^{n-N}}+\frac{1}{C}\sum_{n\leq N}\frac{1}{2^{\left(\frac{p}{q}-1\right)(N-n)}}\right)M(|s-t|)\\
&=8\cdot4^p\left(1+\frac{1}{C}\frac{1}{1-2^{1-\frac{p}{q}}}\right)M(|s-t|).
\end{align*}
So we may take $$A=8\cdot4^p\left(1+\frac{1}{C}\frac{1}{1-2^{1-\frac{p}{q}}}\right).$$

For the lower estimate in \eqref{odhad_fnk}, suppose that $s,t\in\er$, $s<t$, and let $N\in\zet$ now satisfy $\frac{1}{2^{N+2}}<|s-t|\leq\frac{1}{2^{N+1}}$. Let $K$ be the largest $k\in\zet$ such that $s$ belongs to the support of $f_{N,k}$. Then $s\in\left[\frac{K-1}{2^{N+1}},\frac{K-1}{2^{N+1}}+\frac{1}{2^{N+1}}\right)$ and $t\in\left[\frac{K-1}{2^{N+1}},\frac{K-1}{2^{N+1}}+\frac{1}{2^N}\right)$. Hence 
\begin{align*}
|f_{N,K}(s)-f_{N,K}(t)|^p&=a_N^p|s-t|^p\geq2^{p(N+2)}M\left(\frac{1}{2^{N+1}}\right)\frac{1}{2^{p(N+2)}}\\
&=M\left(\frac{1}{2^{N+1}}\right)\geq M(|s-t|),
\end{align*}
and therefore $$\sum_{n,k=-\infty}^\infty|f_{n,k}(s)-f_{n,k}(t)|^p\geq|f_{N,K}(s)-f_{N,K}(t)|^p\geq M(|s-t|).$$

\noindent{\bfseries Step 2:} 
Define $f:h_M\to\ell_p(\en\times\zet\times\zet)$ by $$f(x)=(f_{n,k}(x_i)-f_{n,k}(0))_{(i,n,k)\in\en\times\zet\times\zet},$$ where $x=(x_i)_{i=1}^\infty$ (the fact that $f(x)\in\ell_p(\en\times\zet\times\zet)$ for every $x\in h_M$ will follow from the estimates below). Let us show that $f$ is a strong uniform embedding.

Let $x=(x_i)_{i=1}^\infty,y=(y_i)_{i=1}^\infty\in h_M$. By \eqref{odhad_fnk}, for each $i\in\en$ we have $$M(|x_i-y_i|)\leq\sum_{n,k=-\infty}^\infty|f_{n,k}(x_i)-f_{n,k}(y_i)|^p\leq A\,M(|x_i-y_i|),$$ and therefore $$\sum_{i=1}^\infty M(|x_i-y_i|)\leq\|f(x)-f(y)\|^p\leq A\sum_{i=1}^\infty M(|x_i-y_i|).$$ By Lemma \ref{sum_vs_norm}, if $\|x-y\|\leq1$, then $$\|f(x)-f(y)\|^p\leq A\sum_{i=1}^\infty M(|x_i-y_i|)\leq A\|x-y\|,$$ and if $\|x-y\|\geq1$, then $$\|f(x)-f(y)\|^p\geq\sum_{i=1}^\infty M(|x_i-y_i|)\geq\|x-y\|.$$ If $\|x-y\|>1$, then, by \eqref{bound_beta_M}, for every $i\in\en$ we have $$M\left(\frac{|x_i-y_i|}{\|x-y\|}\right)\geq CM(|x_i-y_i|)\frac{1}{\|x-y\|^q},$$ and therefore, using also \eqref{sum_eq1}, we obtain
\begin{align*}
\|f(x)-f(y)\|^p&\leq A\sum_{i=1}^\infty M(|x_i-y_i|)\leq\frac{A}{C}\sum_{i=1}^\infty M\left(\frac{|x_i-y_i|}{\|x-y\|}\right)\|x-y\|^q\\
&=\frac{A}{C}\|x-y\|^q.
\end{align*}
If $\|x-y\|<1$, then similarly
\begin{align*}
\|f(x)-f(y)\|^p&\geq\sum_{i=1}^\infty M(|x_i-y_i|)\geq C\sum_{i=1}^\infty M\left(\frac{|x_i-y_i|}{\|x-y\|}\right)\|x-y\|^q\\
&=C\|x-y\|^q.
\end{align*}

Now define 
$$\rho_1(t)=\begin{cases}
C^\frac{1}{p}t^\frac{q}{p}&\text{if }t\in[0,1),\\
t^{\frac{1}{p}}&\text{if }t\geq1,
\end{cases}$$
and
$$\rho_2(t)=\begin{cases}
A^{\frac{1}{p}}t^{\frac{1}{p}}&\text{if }t\in[0,1],\\
\left(\frac{A}{C}\right)^{\frac{1}{p}}t^\frac{q}{p}&\text{if }t>1.
\end{cases}$$
Then $\rho_1,\rho_2$ are nondecreasing (since $C\leq1$), $\lim_{t\to\infty}\rho_1(t)=\infty$, and for all $x,y\in h_M$ we have $$\rho_1(\|x-y\|)\leq\|f(x)-f(y)\|\leq\rho_2(\|x-y\|).$$ Hence $f$ is a coarse embedding, and clearly it is also a uniform embedding. Since $\ell_p(\en\times\zet\times\zet)$ is isometric to $\ell_p$, we have obtained a strong uniform embedding of $h_M$ into $\ell_p$.
\end{proof}

We are now ready to give a sufficient condition for the strong uniform embeddability between Orlicz sequence spaces. Recall that if $M,N$ are metric spaces and $f:M\to N$ is a mapping, then $f$ is called a \emph{Lipschitz embedding} provided $f$ is injective and both $f$ and $f^{-1}:f(M)\to M$ are Lipschitz mappings. Clearly if $f$ is a Lipschitz embedding, then $f$ is a strong uniform embedding. 

\begin{thm}\label{orlicz_positive}
Let $M,N$ be Orlicz functions. If $\beta_M<\beta_N$ or $\beta_N\leq\beta_M<2$ or $\beta_M=\beta_N=\infty$, then $h_M$ strongly uniformly embeds into $h_N$.
\end{thm}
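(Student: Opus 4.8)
The plan is to assemble, in each case, a short chain of strong uniform embeddings $h_M\to\cdots\to h_N$ whose intermediate spaces are $\ell_p$-spaces or $c_0$, using only Proposition \ref{emb_orlicz_lp}, Theorem \ref{l2_emb} and Theorem \ref{lt_embeddings}. Two elementary facts make this work. First, a composition of finitely many strong uniform embeddings is again a strong uniform embedding: the lower (resp. upper) moduli compose to a nondecreasing function, the composed lower modulus still tends to $\infty$, and a composition of uniformly continuous maps (resp. of their inverses on the images) is uniformly continuous. Second, an isomorphism of a Banach space onto a subspace of another is bi-Lipschitz, hence a Lipschitz embedding, hence---as noted just before the statement---a strong uniform embedding; so whenever Theorem \ref{lt_embeddings} hands us a linear copy of $\ell_p$ or $c_0$ inside $h_N$, we may treat it as a strong uniform embedding.

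I would then split into cases. If $\beta_N=\infty$ (this covers the disjunct $\beta_M=\beta_N=\infty$ as well as the subcase $\beta_M<\beta_N=\infty$ of the first disjunct): Theorem \ref{lt_embeddings} with $p=\infty$ gives that $c_0$ linearly embeds into $h_N$, while $h_M$, being a separable metric space, bi-Lipschitz embeds into $c_0$ by Aharoni's theorem (every separable metric space bi-Lipschitz embeds into $c_0$); composing gives a strong uniform embedding $h_M\hookrightarrow h_N$. If $\beta_N<\infty$ and $\beta_M<\beta_N$: then $\beta_M<\infty$, so Proposition \ref{emb_orlicz_lp} with $p=\beta_N$ gives a strong uniform embedding $h_M\hookrightarrow\ell_{\beta_N}$, and since $\beta_N\in[\alpha_N,\beta_N]$, Theorem \ref{lt_embeddings} gives a linear embedding $\ell_{\beta_N}\hookrightarrow h_N$; composing finishes this case. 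Finally, if $\beta_N\le\beta_M<2$: since $\beta_M<2<\infty$, Proposition \ref{emb_orlicz_lp} with $p=2$ gives a strong uniform embedding $h_M\hookrightarrow\ell_2$; since $1\le\alpha_N\le\beta_N\le\beta_M<2$, Theorem \ref{l2_emb} with $p=\alpha_N$ gives a strong uniform embedding $\ell_2\hookrightarrow\ell_{\alpha_N}$; and since $\alpha_N\in[\alpha_N,\beta_N]$, Theorem \ref{lt_embeddings} gives a linear embedding $\ell_{\alpha_N}\hookrightarrow h_N$. Composing the three maps completes the proof.

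The argument is mostly bookkeeping, and the only step requiring real thought is the choice of intermediate space in the last case: $\beta_M<2$ is exactly what lets us enter $\ell_2$ through Proposition \ref{emb_orlicz_lp} (we need $2>\beta_M$), while the hypothesis $\beta_N\le\beta_M$ forces $\alpha_N<2$, which is precisely what Theorem \ref{l2_emb} needs in order to carry $\ell_2$ back into an $\ell_p$ sitting inside $h_N$. The points to be careful about are minor: checking that the chosen exponents ($\beta_N$, $2$, $\alpha_N$) genuinely lie in the admissible ranges and that the relevant upper index is finite wherever Proposition \ref{emb_orlicz_lp} is invoked, and recording that prepending the Aharoni embedding and composing strong uniform embeddings keeps us inside the class. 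I do not foresee any serious obstacle beyond these verifications.
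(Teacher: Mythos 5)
Your proposal is correct and takes essentially the same route as the paper: Aharoni's theorem plus a linear copy of $c_0$ when $\beta_N=\infty$, Proposition \ref{emb_orlicz_lp} with $p=\beta_N$ followed by a linear copy of $\ell_{\beta_N}$ when $\beta_M<\beta_N<\infty$, and Proposition \ref{emb_orlicz_lp} into $\ell_2$ followed by Theorem \ref{l2_emb} and Theorem \ref{lt_embeddings} when $\beta_N\leq\beta_M<2$. The only (immaterial) difference is that in the last case you route through $\ell_{\alpha_N}$ where the paper uses $\ell_{\beta_N}$; both exponents lie in $[1,2)$ and in $[\alpha_N,\beta_N]$, so either choice works.
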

\begin{proof}
If $\beta_N=\infty$, then $c_0$ linearly embeds into $h_N$ by Theorem \ref{lt_embeddings}, and since every separable metric space Lipschitz embeds into $c_0$ by \cite{aharoni}, we conclude that any $h_M$ even Lipschitz embeds into $h_N$. So suppose that $\beta_N<\infty$.

If $\beta_M<\beta_N$, then $h_M$ strongly uniformly embeds into $\ell_{\beta_N}$ by Proposition \ref{emb_orlicz_lp}, and $\ell_{\beta_N}$ linearly embeds into $h_N$ by Theorem \ref{lt_embeddings}. Hence $h_M$ strongly uniformly embeds into $h_N$.

If $\beta_N\leq\beta_M<2$, then $h_M$ strongly uniformly embeds into $\ell_2$ by Proposition~\ref{emb_orlicz_lp}. By Theorem \ref{l2_emb}, $\ell_2$ strongly uniformly embeds into $\ell_{\beta_N}$, which in turn linearly embeds into $h_N$ by Theorem \ref{lt_embeddings}, and therefore $h_M$ strongly uniformly embeds into~$h_N$.
\end{proof}

To give a condition ensuring the nonexistence of a coarse or uniform embedding between two Orlicz sequence spaces, we will use the following result due to Mendel and Naor. Recall that if $X$ is a Banach space, then we define $q_X=\inf\left\{q\geq2:X\text{ has cotype }q\right\}$. 

\begin{thm}[\text{\cite[Theorems 1.9 and 1.11]{mendelnaor}}]\label{menaor_cotype}
Let $Y$ be a Banach space with nontrivial type and let $X$ be a Banach space which coarsely or uniformly embeds into $Y$. Then $q_X\leq q_Y$.
\end{thm}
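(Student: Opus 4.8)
The plan is to reproduce the argument of Mendel and Naor, whose engine is the notion of \emph{metric cotype}. Say that a metric space $(Z,d)$ has metric cotype $q$ with constant $\Gamma$ if for every $n\in\en$ there is an even integer $m$ such that every $f\colon\zet_m^n\to Z$ satisfies
$$\left(\sum_{j=1}^n\mathbb{E}_x\left[d\!\left(f\!\left(x+\tfrac m2 e_j\right),f(x)\right)^q\right]\right)^{\frac1q}\le\Gamma m\left(\mathbb{E}_{\varepsilon,x}\left[d\!\left(f(x+\varepsilon),f(x)\right)^q\right]\right)^{\frac1q},$$
with $x$ uniform on $\zet_m^n$ and $\varepsilon$ uniform on $\{-1,0,1\}^n$ (the optimal $m$ turns out to be of order $n^{1/q}$). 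The proof then splits into three pieces: (a) if $Y$ has nontrivial type and Rademacher cotype $q$, then $Y$ has metric cotype $q$; (b) metric cotype $q$ is inherited, in a suitably rescaled form, by any Banach space $X$ that coarsely or uniformly embeds into $Y$; (c) a Banach space with metric cotype $q$ has Rademacher cotype $q$. Granting these, fix $q>q_Y$. Then $Y$ has Rademacher cotype $q$ and nontrivial type, so by (a) it has metric cotype $q$; by (b), $X$ has metric cotype $q$; by (c), $X$ has Rademacher cotype $q$, hence $q_X\le q$. Letting $q\downarrow q_Y$ yields $q_X\le q_Y$.

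Step (c) is the mildest: one plugs into the metric cotype inequality for $X$ the test maps $f(x)=\sum_{j=1}^n\cos\!\left(\frac{2\pi x_j}{m}\right)v_j$ with $v_1,\dots,v_n\in X$ and lets $m\to\infty$. The left-hand side converges to a constant multiple of $\sum_j\|v_j\|^q$, the right-hand side Riemann-sums to an average of $\left\|\sum_j\varepsilon_j\sin(2\pi U_j)v_j\right\|^q$ over $\varepsilon\in\{-1,0,1\}^n$ and uniform $U_j$, and a routine contraction argument bounds this by the Rademacher average $\mathbb{E}\left\|\sum_j\eta_j v_j\right\|^q$; this is exactly the Rademacher cotype $q$ inequality. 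Step (b) is where the looseness of coarse and uniform embeddings is used. For a coarse embedding $f$ with moduli $\rho_1,\rho_2$, one applies the metric cotype inequality for $Y$ to $f\circ g$, where $g$ runs over suitably \emph{rescaled} copies of test maps into $X$; since in that inequality the distances on the left are of order $m$ times those on the right, one cannot simply scale by a single factor, and instead selects, by a pigeonhole over many dyadic scales, a regime on which $\rho_1$ and $\rho_2$ are comparable to linear functions, so that the inequality can be pushed back to $X$ with a finite constant. A symmetric argument at small scales, using uniform continuity of $f$ and of $f^{-1}$, treats uniform embeddings. This scale-selection, forced by the fact that $\rho_1,\rho_2$ are arbitrary nondecreasing functions, is the main subtlety of the embedding part.

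The heart of the proof is step (a), which I expect to be by far the hardest. One performs Fourier analysis on the discrete torus $\zet_m^n$: expanding $f$ in characters, the ``full derivative'' $\sum_j\|f(x+\tfrac m2 e_j)-f(x)\|^q$ is governed by the characters having an odd coordinate, and its dominant contribution comes from the level-one part of $f$ (exactly one odd coordinate). Bounding the $L^q(Y)$-norm of the projection onto that level requires the Rademacher projection to be bounded on $L^q(Y)$, i.e.\ that $Y$ be $K$-convex --- which, by Pisier's theorem, is equivalent to $Y$ having nontrivial type; this is precisely where the hypothesis enters. Combining this projection bound with the Rademacher cotype $q$ inequality applied along the coordinate directions, and optimizing the choice of $m$ (it must grow like $n^{1/q}$), produces the metric cotype $q$ inequality for $Y$. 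The Fourier-analytic estimates on $\zet_m^n$ together with the $K$-convexity input form the genuinely deep core; steps (b) and (c), while not trivial, are comparatively soft.
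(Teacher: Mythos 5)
The paper offers no proof of this statement---it is quoted from Mendel and Naor---so your attempt has to be measured against their argument. Your steps (a) and (c) do correspond to genuine components of that paper: their Theorem~1.4 (a $K$-convex space with Rademacher cotype $q$ has metric cotype $q$ with the sharp scaling $m\approx n^{1/q}$, which is exactly where nontrivial type enters, via Pisier's theorem), and the implication from metric cotype to Rademacher cotype via the cosine test maps. The genuine gap is step (b). Mendel and Naor prove no transfer theorem of the form ``metric cotype passes to a Banach space that coarsely or uniformly embeds into $Y$,'' and your sketch of it would fail: the metric cotype inequality has to be verified for \emph{every} $f:\zet_m^n\to X$, and a single such $f$ produces distances $\|f(x+\varepsilon)-f(x)\|$ and $\|f(x+\frac{m}{2}e_j)-f(x)\|$ spread over many incomparable scales (the latter are moreover forced to be roughly $m$ times the former), so there is no regime, dyadic pigeonhole or otherwise, on which $\rho_1$ and $\rho_2$ are simultaneously comparable to linear functions for all distances that occur. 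Note also that assertion (b), even in a weakened form sufficient to feed into (c), is essentially as strong as the theorem being proved, so it cannot simply be asserted.

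What Mendel and Naor actually do avoids transferring metric cotype to $X$ altogether. Assume $q_Y<q<q_X$ and that $X$ embeds coarsely or uniformly into $Y$ with moduli $\rho_1,\rho_2$. By (a), $Y$ has metric cotype $q$ with $m\approx n^{1/q}$. By the Maurey--Pisier theorem, $\ell_{q_X}^n$ $(1+\epsilon)$-embeds into $X$ for every $n$, so the grids $\zet_m^n$, equipped with a scaled $\ell_{q_X}$ metric, sit almost isometrically inside $X$; one then applies the metric cotype inequality of $Y$ only to the compositions of these grid maps with the embedding into $Y$ (modulo routine handling of the wrap-around in $\zet_m^n$). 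For such maps the domain distances occupy just two controlled scales, about $s$ for the edges and about $sm$ for the half-shifts, which is precisely what makes the unknown moduli manageable. Choosing $s\approx n^{-1/q_X}$ in the coarse case makes the right-hand side $O\!\left(n\,\rho_2(1)^q\right)$ while the left-hand side is at least $n\,\rho_1\!\left(c\,n^{1/q-1/q_X}\right)^q$, which grows faster since $\rho_1(t)\to\infty$; choosing $s\approx n^{-1/q}$ in the uniform case keeps the left-hand side of order $n$ while the right-hand side is $o(n)$ because $\rho_2(t)\to 0$ as $t\to 0$. Either way one gets a contradiction, so $q_X\le q$, and letting $q\downarrow q_Y$ finishes the proof; in this route your step (c) is not needed at all. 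To repair your write-up you must either prove the transfer statement (b), which your scale-selection idea does not establish, or replace steps (b)--(c) by this Maurey--Pisier grid argument.
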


\begin{thm}\label{orlicz_negative}
Let $M,N$ be Orlicz functions. If $\beta_M>2$ and $\beta_N<\beta_M$, then $h_M$ does not coarsely or uniformly embed into $h_N$.
\end{thm}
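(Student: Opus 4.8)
The plan is to argue by contradiction and to push the problem out of $h_N$ — which need not have nontrivial type, for instance when $\alpha_N=1$, in which case $\ell_1$ embeds linearly into $h_N$ by Theorem \ref{lt_embeddings} — and into an $\ell_{p'}$-space, to which the Mendel--Naor cotype obstruction (Theorem \ref{menaor_cotype}) does apply, since $\ell_{p'}$ has nontrivial type whenever $p'>1$. The vehicle for this reduction is Proposition \ref{emb_orlicz_lp}, which embeds $h_N$ into such an $\ell_{p'}$. So one cannot hope to apply Theorem \ref{menaor_cotype} directly with $Y=h_N$; the content of the argument is precisely to insert an intermediate space.

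Concretely, I would proceed as follows. Assume $h_M$ coarsely or uniformly embeds into $h_N$. The hypothesis $\beta_N<\beta_M$ forces $\beta_N<\infty$, so we may fix a finite number $p'$ with $\beta_N<p'<\beta_M$; note that then $p'>\beta_N\ge1$. By Proposition \ref{emb_orlicz_lp} applied with $N$ in the role of $M$, the space $h_N$ strongly uniformly embeds into $\ell_{p'}$, and composing embeddings we get a coarse or uniform embedding of $h_M$ into $\ell_{p'}$. Since $p'>1$, $\ell_{p'}$ has nontrivial type, so Theorem \ref{menaor_cotype} gives $q_{h_M}\le q_{\ell_{p'}}$. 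By the identity \eqref{cotype_orlicz} this reads $\max(2,\beta_M)\le\max(2,p')$, which is impossible: from $p'<\beta_M$ and $2<\beta_M$ we get $\max(2,p')<\max(2,\beta_M)$ (and if $\beta_M=\infty$ then already $q_{h_M}=\infty>q_{\ell_{p'}}$). This contradiction proves the theorem.

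The only genuinely new ingredient here is the routing through an intermediate $\ell_{p'}$ of nontrivial type; the cotype formula \eqref{cotype_orlicz} and the construction underlying Proposition \ref{emb_orlicz_lp} are already in hand. I expect no serious obstacle, the one point requiring a little care being the strict inequality $\max(2,p')<\max(2,\beta_M)$, which uses both halves of the hypothesis ($\beta_M>2$ and $\beta_N<\beta_M$). For the record, when $\beta_M<\infty$ one can avoid Theorem \ref{menaor_cotype} entirely: $\ell_{\beta_M}$ embeds linearly into $h_M$ by Theorem \ref{lt_embeddings}, hence coarsely or uniformly into $\ell_{p'}$, contradicting Theorem \ref{lp_embeddings} because $2<p'<\beta_M$; but the cotype route handles $\beta_M=\infty$ uniformly and so is cleaner overall.
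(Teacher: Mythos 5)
Your argument is correct and is essentially identical to the paper's proof: both assume an embedding, pick $p\in(\beta_N,\beta_M)$, route $h_N$ into $\ell_p$ via Proposition \ref{emb_orlicz_lp}, and derive a contradiction from Theorem \ref{menaor_cotype} together with the cotype formula \eqref{cotype_orlicz}. Your closing remark about an alternative via Theorems \ref{lt_embeddings} and \ref{lp_embeddings} is a nice aside, but the main route is the paper's own.
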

\begin{proof}
Suppose for a contradiction that $h_M$ coarsely or uniformly embeds into $h_N$. Pick any $p\in(\beta_N,\beta_M)$. Then $h_N$ strongly uniformly embeds into $\ell_p$ by Proposition~\ref{emb_orlicz_lp}, and therefore $h_M$ coarsely or uniformly embeds into $\ell_p$. But $\ell_p$ has nontrivial type (since $p>1$) and, by \eqref{cotype_orlicz}, $$q_{h_M}=\max(2,\beta_M)>\max(2,p)=q_{\ell_p},$$ which contradicts Theorem \ref{menaor_cotype}.
\end{proof}

Theorems \ref{orlicz_positive} and \ref{orlicz_negative} give an almost complete classification of the coarse (uniform) embeddability between Orlicz sequence spaces. In the remaining cases, when $\beta_N\leq\beta_M=2$ or $2<\beta_M=\beta_N<\infty$, the situation is more complicated.

Let us now investigate the case when $\beta_N\leq\beta_M=2$. We will show that in this case the coarse (uniform) embeddability of $h_M$ into $h_N$ is not determined by the values of $\beta_M$ and $\beta_N$. More precisely, for any $1\leq p\leq2$ we can find Orlicz functions $M_1,N_1,M_2,N_2$ such that $\beta_{M_1}=\beta_{M_2}=2$ and $\beta_{N_1}=\beta_{N_2}=p$, and such that $h_{M_1}$ coarsely (uniformly) embeds into $h_{N_1}$ and $h_{M_2}$ does not coarsely (uniformly) embed into $h_{N_2}$. Of course, by Theorem~\ref{l2_emb}, $\ell_2$ strongly uniformly embeds into $\ell_p$, providing thus examples of $M_1$ and $N_1$. Let us give examples of $M_2$ and $N_2$. 

We will use the following theorem due to Johnson and Randrianarivony. 

\begin{thm}[\text{\cite[Theorem 1]{johnsonrandria}}]\label{jr}
Let $X$ be a Banach space with a normalized symmetric basis $(e_n)_{n=1}^\infty$ such that $$\liminf_{n\to\infty}\frac{1}{n^\frac{1}{2}}\left\|\sum_{i=1}^ne_i\right\|=0.$$ Then $X$ does not coarsely or uniformly embed into a Hilbert space.
\end{thm}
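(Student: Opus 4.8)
The plan is to normalise the problem and then pit the metric analogue of type~$2$ enjoyed by every Hilbert space --- equivalently, the fact that squared distance is a negative definite kernel, equivalently that $n$-dimensional cubes need distortion $\ge\sqrt n$ to embed into Hilbert space --- against the hypothesis $\liminf_{n}n^{-1/2}\|\sum_{i=1}^{n}e_{i}\|=0$, which says precisely that the symmetric basis of $X$ produces cubes inside $X$ that are far too flat. So suppose $f:X\to H$ is a coarse or uniform embedding, $\rho_{1}(\|x-y\|)\le\|f(x)-f(y)\|\le\rho_{2}(\|x-y\|)$ with $\rho_{1}$ nondecreasing and $\rho_{1}(t)\to\infty$. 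Since $X$ is a Banach space, cutting $[x,y]$ into $\lceil\|x-y\|\rceil$ pieces of length $\le1$ and using the triangle inequality replaces $\rho_{2}$ by the coarse-Lipschitz majorant $t\mapsto\rho_{2}(1)(t+1)$; in the uniform case I would also keep $\rho_{2}(t)\to0$ as $t\to0^{+}$. Composing $f$ with a dilation of $X$ I may also assume $\rho_{1}(1)\ge1$. Write $\lambda_{n}=\|\sum_{i=1}^{n}e_{i}\|$; after passing to an equivalent norm the basis is $1$-symmetric, so $\|\sum_{i=1}^{n}\varepsilon_{i}e_{i}\|=\lambda_{n}$ for every $\varepsilon\in\{-1,1\}^{n}$, and $(\lambda_{n})$ is nondecreasing and subadditive. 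I may assume $\lambda_{n}\to\infty$, since otherwise the basis is equivalent to the unit vector basis of $c_{0}$, a case to be handled separately using the known fact that $c_{0}$ does not coarsely or uniformly embed into a Hilbert space.

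The key construction is to read off cubes of $X$ at every scale: for $t>0$ and $n\in\mathbb{N}$ the map $A\mapsto t\sum_{i\in A}e_{i}$ sends $\{0,1\}^{n}$ into $X$ so that neighbouring vertices $A,\,A\triangle\{j\}$ are at distance $t$ and antipodal vertices $A,\,A^{c}$ are at distance $t\lambda_{n}$. Composing with $f$ and invoking Enflo's inequality, valid in any Hilbert space $H$ --- for every $g:\{0,1\}^{n}\to H$ one has $\mathbb{E}_{A}\|g(A)-g(A^{c})\|^{2}\le\sum_{j=1}^{n}\mathbb{E}_{A}\|g(A)-g(A\triangle\{j\})\|^{2}$ --- and bounding the left side below by $\rho_{1}(t\lambda_{n})^{2}$ and the right side above by $n\,\rho_{2}(t)^{2}$, one obtains the single-scale inequality
\[ \rho_{1}(t\lambda_{n})^{2}\le n\,\rho_{2}(t)^{2}\qquad(t>0,\ n\in\mathbb{N}). \]

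The main obstacle is that this, used at one scale, is not yet a contradiction: it only says $\rho_{1}(t\lambda_{n})\lesssim\sqrt n\,\rho_{2}(t)$, and because of the unavoidable factor $\sqrt n$ and the fact that the coarse-Lipschitz $\rho_{2}$ need not vanish at $0$, it is consistent with all the standing hypotheses in isolation. The substance of the argument --- the step I expect to be the real difficulty, and which is what Johnson and Randrianarivony actually carry out --- is to use the \emph{same} embedding $f$ at many scales simultaneously: either a dyadic cascade of cubes at scales $2^{-k}$ whose Enflo inequalities telescope, or a sparser configuration better adapted to the failure of cotype~$2$ (for instance a Johnson-graph family of $k$-subsets of $[n]$ for a suitably chosen $k$), so that the accumulated ``diagonal'' lengths --- which along a subsequence realising the $\liminf$ grow only like $o(\sqrt n)$ --- cannot keep up with the $\sqrt n$ losses coming from the $n$ edges present at each scale; the combined inequality then bounds $\rho_{1}$ on an unbounded set of arguments, contradicting $\rho_{1}(t)\to\infty$. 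Making this precise --- the coordinated choice of the dimensions and of the scales (or of $k$) and the control of the resulting error terms, with $\rho_{2}(t)\to0$ used to economise on scales in the uniform case --- is the delicate heart of the matter. I would emphasise that this is genuinely sharper than the Mendel--Naor obstruction of Theorem~\ref{menaor_cotype}, which yields only $q_{X}\le q_{H}=2$, i.e.\ that $X$ has cotype $q$ for every $q>2$ --- strictly weaker than the failure of $\|\sum_{i=1}^{n}e_{i}\|\gtrsim\sqrt n$ that is being exploited here.
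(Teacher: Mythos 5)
First, note that the paper does not actually prove Theorem \ref{jr}: it is quoted verbatim from Johnson--Randrianarivony, and the uniform-embeddability half is obtained by citing Randrianarivony's theorem that a Banach space uniformly embeds into a Hilbert space if and only if it coarsely embeds into one. So your attempt has to stand on its own, and it does not. The only thing you actually verify is the single-scale inequality $\rho_1(t\lambda_n)^2\le n\,\rho_2(t)^2$, and you yourself concede that it is consistent with all the standing hypotheses; the passage from there to a contradiction --- which is the entire content of the theorem --- is left as a plan (``dyadic cascade'', ``Johnson-graph family'', ``the delicate heart of the matter''). Moreover, the plan is aimed in a direction that cannot succeed as described. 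Enflo's cube inequality bounds the images of the \emph{diagonals} from above by the edges, so the hypothesis $\liminf_n\lambda_n/\sqrt{n}=0$ (short diagonals) only makes your inequality \emph{weaker}: every estimate produced this way has the small quantity $t\lambda_n$ inside $\rho_1$ on the bounded side. Concretely, to contradict $\rho_1(t)\to\infty$ from $\rho_1(t\lambda_n)^2\le n\,\rho_2(t)^2$ with $\rho_2$ coarse-Lipschitz you would need to choose scales with $t\lambda_n\to\infty$ while $n t^2$ stays bounded, i.e.\ $\lambda_n/\sqrt{n}\to\infty$ --- the exact opposite of the hypothesis --- and telescoping such upper bounds over several scales again produces inequalities of the same form. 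To exploit short diagonals one needs an inequality in which the diagonal sits on the side that is forced to be large; this is what Johnson and Randrianarivony actually achieve, by a quite different route: replace $\|f(x)-f(y)\|^2$ by a translation-invariant (and basis-symmetric) negative definite kernel via an invariant mean, apply Schoenberg's theorem to the associated positive definite functions, and run an Aharoni--Maurey--Mityagin-type comparison using the symmetric basis. None of that machinery appears, or is replaced by anything, in your sketch. (Your closing remark that the statement is strictly sharper than Theorem \ref{menaor_cotype} is correct --- that is precisely why the paper needs it for Example \ref{ex_no_emb} --- but it also indicates why a purely cotype-flavoured cube argument cannot be enough.)

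There is a second gap in the uniform case. For a uniform embedding you do not get $\rho_1(t)\to\infty$; uniform continuity of $f^{-1}$ only yields a nondecreasing $\rho_1$ with $\rho_1(t)>0$ for $t>0$, so your final step ``contradicting $\rho_1(t)\to\infty$'' simply does not apply to the uniform half of the statement. The paper handles this by quoting Randrianarivony's result that, for Banach spaces, uniform embeddability into a Hilbert space implies coarse embeddability into a Hilbert space; you would need to invoke that (or supply a genuinely small-scale argument, using $\rho_2(t)\to0$, which you have not done). Finally, a minor point: disposing of the bounded-$\lambda_n$ case by ``the known fact that $c_0$ does not coarsely or uniformly embed into a Hilbert space'' imports yet another nontrivial external theorem and should at least be cited.
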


This theorem was originally stated only for coarse embeddability; the statement about uniform embeddability follows from a result of Randrianarivony \cite[a paragraph before Theorem 1]{randriana}, who proved that a Banach space coarsely embeds into a Hilbert space if and only if it uniformly embeds into a Hilbert space.

\begin{pro}\label{nonembeddingtol2}
Let $M$ be an Orlicz function such that $$\lim_{t\to0_+}\frac{M(t)}{t^2}=0.$$ Then $h_M$ does not coarsely or uniformly embed into $\ell_2$.
\end{pro}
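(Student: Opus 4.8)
The plan is to deduce the statement from the Johnson--Randrianarivony criterion, Theorem~\ref{jr}. The canonical basis $(e_n)_{n=1}^\infty$ of $h_M$ is symmetric and satisfies $\|e_n\|=\|e_1\|$ for every $n$, so $(e_n/\|e_1\|)_{n=1}^\infty$ is a normalized symmetric basis of $h_M$; normalizing rescales the fundamental function only by the constant $\|e_1\|^{-1}$, so it suffices to prove that $n^{-1/2}\bigl\|\sum_{i=1}^n e_i\bigr\|\to0$ as $n\to\infty$, and then Theorem~\ref{jr} will give that $h_M$ does not coarsely or uniformly embed into a Hilbert space, in particular not into $\ell_2$.

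The first substantive step is to compute the fundamental function. For each $n\in\en$ let $\lambda_n$ be the largest $t>0$ with $M(t)\le\frac1n$. Continuity and monotonicity of $M$, together with $M(0)=0$ and $M(t)\to\infty$, ensure that $\lambda_n$ exists, lies in $(0,\infty)$, satisfies $M(\lambda_n)=\frac1n$, and that $M(t)\le\frac1n\iff t\le\lambda_n$. Substituting $x=\sum_{i=1}^n e_i$ into the definition of the Orlicz norm gives $\sum_i M(|x_i|/\rho)=nM(1/\rho)$, whence
$$\Bigl\|\sum_{i=1}^n e_i\Bigr\|=\inf\bigl\{\rho>0:nM(1/\rho)\le1\bigr\}=\frac{1}{\lambda_n}$$
(consistent with \eqref{sum_eq1}). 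Since $n=1/M(\lambda_n)$, this yields $n^{-1/2}\bigl\|\sum_{i=1}^n e_i\bigr\|=\dfrac{\sqrt{M(\lambda_n)}}{\lambda_n}$.

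It then remains to see that this tends to $0$. Because $M(\lambda_n)=\frac1n\to0$ and $M$ is nondegenerate and continuous, $\lambda_n\to0^+$; and the hypothesis $\lim_{t\to0^+}M(t)/t^2=0$ is exactly the statement $\lim_{t\to0^+}\sqrt{M(t)}/t=0$. Evaluating along the sequence $\lambda_n\to0^+$ gives $\sqrt{M(\lambda_n)}/\lambda_n\to0$, i.e. $n^{-1/2}\bigl\|\sum_{i=1}^n e_i\bigr\|\to0$, so in particular the $\liminf$ appearing in Theorem~\ref{jr} vanishes.

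I do not expect a genuinely hard step here: the argument is essentially an assembly of known facts. The one point deserving care is the elementary identity $\bigl\|\sum_{i=1}^n e_i\bigr\|=1/\lambda_n$ and, together with it, the use of continuity and nondegeneracy of $M$ to be certain that $\lambda_n$ is well defined and that $\lambda_n\to0$, so that the asymptotic hypothesis on $M$ near $0$ can legitimately be applied at the points $\lambda_n$.
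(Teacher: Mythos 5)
Your proof is correct and follows essentially the same route as the paper: compute the fundamental function $\bigl\|\sum_{i=1}^n e_i\bigr\|=1/M^{-1}(1/n)$ (your $\lambda_n$), use the hypothesis $M(t)/t^2\to0$ to see that $n^{-1/2}\bigl\|\sum_{i=1}^n e_i\bigr\|=\sqrt{M(\lambda_n)}/\lambda_n\to0$, and apply Theorem~\ref{jr}. The only difference is cosmetic: the paper normalizes by assuming $M(1)=1$ so that $(e_n)$ is already normalized, while you rescale the basis by $\|e_1\|^{-1}$, which affects nothing since the $\liminf$ condition is scale-invariant.
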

\begin{proof}
We may suppose without loss of generality that $M(1)=1$. Then the sequence of canonical vectors $(e_n)_{n=1}^\infty$ forms a normalized symmetric basis of $h_M$. Furthermore,
\begin{align*}
\left\|\sum_{i=1}^ne_i\right\|&=\inf\left\{\rho>0:\sum_{i=1}^nM\left(\frac{1}{\rho}\right)\leq1\right\}=\inf\left\{\rho>0:M\left(\frac{1}{\rho}\right)\leq\frac{1}{n}\right\}\\
&=\inf\left\{\rho>0:\frac{1}{\rho}\leq M^{-1}\left(\frac{1}{n}\right)\right\}=\frac{1}{M^{-1}\left(\frac{1}{n}\right)},
\end{align*}
and therefore $$\frac{1}{n^\frac{1}{2}}\left\|\sum_{i=1}^ne_i\right\|=\frac{1}{n^\frac{1}{2}M^{-1}\left(\frac{1}{n}\right)}.$$

Let $t_n=M^{-1}\left(\frac{1}{n}\right)$. Then $t_n\to0$ and $M(t_n)=\frac{1}{n}$, and therefore
\begin{align*}
\frac{1}{n^\frac{1}{2}M^{-1}\left(\frac{1}{n}\right)}=\frac{M(t_n)^\frac{1}{2}}{t_n}\xrightarrow{n\to\infty}0,
\end{align*}
since $\lim_{t\to0_+}\frac{M(t)}{t^2}=0$. 

Hence $$\liminf_{n\to\infty}\frac{1}{n^\frac{1}{2}}\left\|\sum_{i=1}^ne_i\right\|=0,$$ and therefore, by Theorem \ref{jr}, the space $h_M$ does not coarsely or uniformly embed into $\ell_2$.
\end{proof}

\begin{exa}\label{ex_no_emb}
There exists an Orlicz function $M$ such that $\alpha_M=\beta_M=2$ and $h_M$ does not coarsely or uniformly embed into $\ell_p$ for any $1\leq p\leq2$.
\end{exa}
\begin{proof}
Let $$f(t)=\frac{t^2}{1-\log t},\ t\in (0,\mathrm{e}).$$ Then using simple calculus we see that $f$ is a continuous convex function, $f(t)>0$ for each $t\in(0,\mathrm{e})$ and $\lim_{t\to 0_+}f(t)=0$. Clearly there exists an Orlicz function $M$ such that $M(t)=f(t)$ for every $t\in(0,1]$.

Let us show that $\alpha_M=\beta_M=2$. Let $q\leq2$ and $\lambda,t\in(0,1]$. Then $$\frac{M(\lambda t)}{M(\lambda)t^q}=\frac{\frac{(\lambda t)^2}{1-\log(\lambda t)}}{\frac{\lambda^2}{1-\log\lambda}t^q}=t^{2-q}\frac{{1-\log\lambda}}{1-\log(\lambda t)}\leq t^{2-q}\leq1$$ (the first inequality follows from the fact that $s\mapsto1-\log s$ is decreasing), and therefore $\alpha_M\geq2$.

Let $q>2$. If $\lambda,t\in(0,1]$, then $$\frac{M(\lambda t)}{M(\lambda)t^q}=t^{2-q}\frac{{1-\log\lambda}}{1-\log(\lambda t)}=t^{2-q}\frac{{1-\log\lambda}}{1-\log\lambda-\log t}=\frac{t^{2-q}}{1+\frac{-\log t}{1-\log\lambda}}
\geq\frac{t^{2-q}}{1-\log t},$$ where the inequality holds since $-\log t\geq0$ and $1-\log\lambda\geq1$. Now if we define $g(s)=\frac{s^{2-q}}{1-\log s}$, $s\in(0,1]$, then $\lim_{s\to0_+}g(s)=\infty$, $g(1)=1$ and $g(s)>0$ for each $s\in(0,1]$. It follows that there is $C>0$ such that $g(s)\geq C$ for each $s\in(0,1]$. Hence $$\frac{M(\lambda t)}{M(\lambda)t^q}\geq C$$ for all $\lambda,t\in(0,1]$. This implies that $\beta_M\leq2$.

Finally, if $t\in(0,1]$, then $$\frac{M(t)}{t^2}=\frac{\frac{t^2}{1-\log t}}{t^2}=\frac{1}{1-\log t}\xrightarrow{t\to0_+}0.$$ Hence, by Proposition \ref{nonembeddingtol2}, $h_M$ does not coarsely or uniformly embed into $\ell_2$. Let $1\leq p<2$. Since $\ell_p$ strongly uniformly embeds into $\ell_2$ by Theorem \ref{lp_embeddings}, it follows that $h_M$ does not coarsely or uniformly embed into $\ell_p$. 
\end{proof}

The last remaining case is when $2<\beta_M=\beta_N<\infty$. In this case, we can of course always have the coarse (uniform) embeddability (since any Banach space strongly uniformly embeds into itself). However, we do not know whether there exist Orlicz functions $M,N$ satisfying $2<\beta_M=\beta_N<\infty$, such that $h_M$ does not coarsely (uniformly) embed into $h_N$.

Let us conclude with a brief summary of the results. Let $M,N$ be Orlicz functions.

\begin{enumerate}\renewcommand{\labelenumi}{\rm(\arabic{enumi})}
\item If $\beta_M<\beta_N$ or $\beta_N\leq\beta_M<2$ or $\beta_M=\beta_N=\infty$, then $h_M$ strongly uniformly embeds into $h_N$.

\item If $\beta_M>2$ and $\beta_N<\beta_M$, then $h_M$ does not coarsely or uniformly embed into $h_N$.

\item If $\beta_N\leq\beta_M=2$, then the coarse (uniform) embeddability of $h_M$ into $h_N$ is not determined by the values of $\beta_M$ and $\beta_N$.

\item If $2<\beta_M=\beta_N<\infty$, then the question of the coarse (uniform) embeddability of $h_M$ into $h_N$ is open.
\end{enumerate}

\begin{acknow}
The author would like to thank Gilles Lancien for many inspiring discussions and helpful comments on this paper. He would also like to thank Tony Proch\'{a}zka for several valuable conversations. The author is grateful to F. Baudier for pointing out to him Albiac's construction of embeddings between $\ell_p$-spaces, and to both F. Albiac and F. Baudier for providing him with their manuscript \cite{albiacbaudier}.

This work was completed during the author's stay at Universit\'{e} de Franche-Comt\'{e} in Besan\c{c}on. The author wishes to thank the staff for their hospitality.
\end{acknow}

\end{document}